\documentclass[12pt]{article}
\usepackage{amssymb}
\usepackage{amsmath}
\usepackage{amsthm}
\usepackage{geometry}		

\geometry{
	hmargin={25mm,25mm},
	vmargin={25mm,30mm}   
}

\let\originalleft\left
\let\originalright\right
\renewcommand{\left}{\mathopen{}\mathclose\bgroup\originalleft}
\renewcommand{\right}{\aftergroup\egroup\originalright}

\begin{document}

\newtheorem{theorem}{Theorem}
\newtheorem{lemma}[theorem]{Lemma}
\theoremstyle{definition}
\newtheorem{definition}{Definition}


\title{
A piecewise-linear fixed point theorem.
}
\author{
D.J.W.~Simpson\\\\
School of Mathematical and Computational Sciences\\
Massey University\\
Palmerston North, 4410\\
New Zealand
}
\maketitle



\begin{abstract}

We prove that if a continuous piecewise-smooth map on $\mathbb{R}^n$ is comprised of two linear functions,
has a bounded orbit, and satisfies a certain non-degeneracy condition, then it has a fixed point.
The result has important consequences to the bifurcation theory of nonsmooth dynamical systems,
yet the proof requires only elementary linear algebra.

\end{abstract}

\subsection*{Fixed point theorems}

A {\em fixed point} of a map $f$ is a point $x^*$ for which $f(x^*) = x^*$.
The existence of a fixed point can be assured in many ways
leading to a wide variety of fixed point theorems.
For instance the Banach fixed point theorem uses contractiveness,
the Lefschetz fixed point theorem uses algebraic topology and index theory,
while the Brouwer fixed point theorem simply uses the domain of the map \cite{GrDu03,Br71}.
The piecewise-linear fixed point theorem introduced below refers to the dynamics of the map,
another example of this being that if an orientation-preserving homeomorphism on $\mathbb{R}^2$ has a periodic solution, then it must have a fixed point \cite{Le08}.
More generally maps on $\mathbb{R}^n$ can have periodic solutions and other interesting dynamics without having fixed points \cite{JiLi16}.

\subsection*{A piecewise-linear fixed point theorem}

Let $A \in \mathbb{R}^{n \times n}$ ($n \ge 2$), $b, c \in \mathbb{R}^n$, and consider
\begin{equation}
f(x) = A x + b|x_1| + c,
\label{eq:f}
\end{equation}
where $x_1$ denotes the first component of $x \in \mathbb{R}^n$.
This is a piecewise-linear map on $\mathbb{R}^n$;
in fact any continuous, two-piece, piecewise-linear map on $\mathbb{R}^n$ can be put in the form \eqref{eq:f}.
In what follows we write $f^k(x)$ for the $k^{\rm th}$ iterate of $x$ under $f$,
and $I$ for the $n \times n$ identity matrix.

\begin{theorem}
Let $P \in \mathbb{R}^{n \times (n-1)}$ denote second through $n^{\rm th}$ columns of $I - A$.
If $P$ has full rank and there exists $x \in \mathbb{R}^n$ such that $\| f^k(x) \| \not\to \infty$ as $k \to \infty$,
then $f$ has a fixed point in $\mathbb{R}^n$.
\label{th:main}
\end{theorem}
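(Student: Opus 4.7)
My plan is to realize the sought fixed point as an extreme point of a polyhedron $S \subset \mathbb{R}^{n+1}$: nonemptiness of $S$ comes from Cesàro averaging along the non-diverging orbit, while the full-rank hypothesis on $P$ collapses the lineality space of $S$, which guarantees the existence of an extreme point that is automatically a fixed point of $f$.

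Write $f(x) = A_R x + c$ on $\{x_1 \ge 0\}$ and $f(x) = A_L x + c$ on $\{x_1 \le 0\}$, where $A_R = A + b e_1^T$ and $A_L = A - b e_1^T$; note that columns $2, \ldots, n$ of both $I - A_R$ and $I - A_L$ coincide with $P$. Summing the identity $f^{k+1}(x) - f^k(x) = (A - I) f^k(x) + b |f^k(x)_1| + c$ for $k = 0, \ldots, N - 1$ and dividing by $N$ yields
\[
(I - A) \bar x_N = b \bar s_N + c - \tfrac{1}{N}\bigl(f^N(x) - x\bigr),
\]
where $\bar x_N, \bar s_N$ are the Cesàro means of $f^k(x)$ and $|f^k(x)_1|$. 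Picking $N_j \to \infty$ along a subsequence on which $\|f^{N_j}(x)\|$ stays bounded (afforded by the hypothesis), extracting a convergent further subsequence of $(\bar x_{N_j}, \bar s_{N_j})$, and preserving the Jensen inequality $\bar s_{N_j} \ge |\bar x_{N_j, 1}|$ in the limit produces a pair $(\alpha, \sigma) \in \mathbb{R}^n \times [0, \infty)$ with $(I - A) \alpha = b \sigma + c$ and $\sigma \ge |\alpha_1|$; hence the polyhedron $S$ cut out by these constraints is nonempty.

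A lineality direction $(d_\alpha, d_\sigma)$ of $S$ must satisfy $(I - A) d_\alpha = b d_\sigma$ together with $d_\sigma = d_{\alpha, 1}$ and $d_\sigma = -d_{\alpha, 1}$, forcing $d_\sigma = d_{\alpha, 1} = 0$; then $(I - A) d_\alpha = 0$ reduces to $P (d_\alpha)_{2, \ldots, n} = 0$, whence $d_\alpha = 0$ by the full column rank of $P$. So $S$ has trivial lineality and admits an extreme point $(\alpha^*, \sigma^*)$. Because the null space of the $n \times (n + 1)$ matrix $[I - A \mid -b]$ has positive dimension, a point of $S$ at which both inequalities $\sigma \ge \pm\alpha_1$ are strict cannot be extreme, for a direction in that null space would yield an admissible small perturbation. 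Hence $\sigma^* = |\alpha^*_1|$, giving $(I - A) \alpha^* = b |\alpha^*_1| + c$, so $\alpha^*$ is a fixed point of $f$.

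The main obstacle is the averaging step under the weak hypothesis $\|f^k(x)\| \not\to \infty$ rather than outright orbit boundedness: the Cesàro means $\bar x_{N_j}, \bar s_{N_j}$ need not be bounded along the chosen subsequence even when $f^{N_j}(x)$ is. To address this I would rescale $(\bar x_{N_j}, \bar s_{N_j})$ by $M_j = \max(\|\bar x_{N_j}\|, \bar s_{N_j}, 1)$; in the event $M_j \to \infty$, any limit point of the rescaled pair lies in the recession cone of $S$, and triviality of the lineality space together with the sign constraints $\sigma \ge |\alpha_1|$ would rule out unbounded averages and deliver an honest element of $S$.
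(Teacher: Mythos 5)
Your approach is genuinely different from the paper's, which constructs $u^{\sf T} = e_1^{\sf T}\,{\rm adj}(I-A)$ and shows $u^{\sf T}f(x) \ge u^{\sf T}x + u^{\sf T}c$ with $u^{\sf T}c \ne 0$, so that in the absence of a fixed point every orbit drifts monotonically to infinity. The convex-geometric core of your argument is sound: if the polyhedron $S = \{(\alpha,\sigma) : (I-A)\alpha = b\sigma + c,\ \sigma \ge \alpha_1,\ \sigma \ge -\alpha_1\}$ is nonempty, then full rank of $P$ does force its lineality space to be trivial, an extreme point exists, at least one inequality must be active there (the $n$ equality rows alone cannot span $\mathbb{R}^{n+1}$), and such a point yields $\sigma^* = |\alpha^*_1|$ and hence a fixed point of $f$.

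The genuine gap is in establishing that $S$ is nonempty, and your proposed repair of the averaging step does not work. The claim that trivial lineality together with the sign constraint kills every nonzero recession direction is false: the cone $C = \{(d_\alpha,d_\sigma): (I-A)d_\alpha = b\,d_\sigma,\ d_\sigma \ge |d_{\alpha,1}|\}$ can be nontrivial even when $P$ has full rank. For instance with $n=2$, $A=0$, $b=e_2$ one finds $C$ contains $(d_{\alpha,1},d_{\alpha,2},d_\sigma)=(0,t,t)$ for all $t\ge 0$, while the lineality space is $\{0\}$; triviality of the lineality space only eliminates directions with $d_\sigma = 0$ (exactly as in your own lineality computation), and the case $d_\sigma>0$ survives, telling you merely that $b\in{\rm range}(I-A)$, which does not by itself produce an element of $S$. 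So unbounded Ces\`aro means are not ruled out by the argument you give. The gap is repairable: what your averaging actually shows is that $c - \tfrac{1}{N_j}\bigl(f^{N_j}(x)-x\bigr)$ lies in $T = \{(I-A)\alpha - b\sigma : \sigma \ge |\alpha_1|\}$, and $T$ is the image of a polyhedron under a linear map, hence itself a polyhedron and in particular closed, so $c\in T$ (i.e.\ $S\ne\emptyset$) follows by letting $j\to\infty$ with no boundedness of the means required. Some such closedness or Farkas-type step must replace the recession-cone argument. Note also that the paper's adjugate computation avoids all of this and gives the stronger quantitative conclusion that, absent a fixed point, $u^{\sf T}f^k(x)$ grows at least linearly in $k$.
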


The requirement that $P$ has full rank (i.e.~its columns are linearly independent) is our non-degeneracy condition.
This condition fails only on a codimension-two subspace of the space of all such maps \eqref{eq:f}.
To demonstrate its necessity, consider
\begin{equation}
A = \begin{bmatrix}
-\frac{1}{2} & 1 & 0 \\[1mm]
-\frac{1}{2} & 0 & 0 \\[1mm]
-\frac{11}{28} & 0 & 1
\end{bmatrix}, \quad
b = \begin{bmatrix} -\frac{1}{2} \\[1mm] -1 \\ \frac{3}{28} \end{bmatrix}, \quad
c = \begin{bmatrix} 1 \\ 0 \\ 0 \end{bmatrix},
\label{eq:counterExample}
\end{equation}
with which
\begin{equation}
P = \begin{bmatrix} -1 & 0 \\ 1 & 0 \\ 0 & 0 \end{bmatrix}
\nonumber
\end{equation}
does not have full rank.
With \eqref{eq:counterExample} the map $f$ has no fixed point
(it is readily seen $f(x) = x$ yields no solutions),
yet  for any $s \in \mathbb{R}$,
\begin{align*}
x = \begin{bmatrix} -\frac{2}{15} \\[1mm] -\frac{7}{5} \\[1mm] s \end{bmatrix}, \quad
f(x) = \begin{bmatrix} -\frac{2}{5} \\[1mm] -\frac{1}{15} \\[1mm] s + \frac{1}{15} \end{bmatrix}, \quad
f^2(x) = \begin{bmatrix} \frac{14}{15} \\[1mm] -\frac{1}{5} \\[1mm] s + \frac{4}{15} \end{bmatrix}, \quad
f^3(x) = x,
\end{align*}
so certainly $\| f^k(x) \| \not\to \infty$.

\subsection*{Fixed points and matrix adjugates}

Before we can prove Theorem \ref{th:main} we need some preliminary calculations.
To this end, we write \eqref{eq:f} in the explicitly piecewise form
\begin{equation}
f(x) = \begin{cases}
\left( A - b e_1^{\sf T} \right) x + c, & x_1 \le 0, \\
\left( A + b e_1^{\sf T} \right) x + c, & x_1 \ge 0,
\end{cases}
\label{eq:f2}
\end{equation}
where $e_1^{\sf T} = \begin{bmatrix} 1 & 0 & \cdots & 0 \end{bmatrix}$; also let
\begin{equation}
M_\pm = I - A \mp b e_1^{\sf T}.
\nonumber
\end{equation}
If $\det(M_-) \ne 0$ the left piece of $f$ (i.e.~the $x_1 \le 0$ piece of \eqref{eq:f2}) has the unique fixed point
\begin{equation}
y^- = M_-^{-1} c.
\label{eq:xL}
\end{equation}
This is a fixed point of $f$ if and only if $y^-_1 \le 0$.
Similarly if $\det(M_+) \ne 0$ the right piece of $f$ has the unique fixed point
\begin{equation}
y^+ = M_+^{-1} c,
\label{eq:xR}
\end{equation}
and this is a fixed point of $f$ if and only if $y^+_1 \ge 0$.
Notice that if $\det(M_-) \ne 0$ and $\det(M_+) \ne 0$ then $y^-$ and $y^+$ are the only possible
fixed points of $f$.
In order to algebraically connect the values $y^-_1$ and $y^+_1$ we use matrix adjugates \cite{Be92,Ko96}:

\begin{definition}
Let $M \in \mathbb{R}^{n \times n}$ be a matrix.
For each $i,j = 1,\ldots,n$
let $m_{ij}$ be the determinant of the $(n-1) \times (n-1)$ matrix formed by
removing the $i^{\rm th}$ row and $j^{\rm th}$ column from $M$ (the $m_{ij}$ are the {\em minors} of $M$).
The {\em adjugate} of $M$, denoted ${\rm adj}(M)$, is the $n \times n$ matrix
with $(i,j)$-entry $(-1)^{i+j} m_{ji}$ for all $i, j = 1,\ldots,n$.
\label{df:adjugate}
\end{definition}

The key property of the matrix adjugate is that any $M \in \mathbb{R}^{n \times n}$ obeys
\begin{equation}
{\rm adj}(M) M = \det(M) I,
\label{eq:adjIdentity}
\nonumber
\end{equation}
so if $\det(M) \ne 0$ then $M^{-1} = \frac{{\rm adj}(M)}{\det(M)}$.
Now let
\begin{equation}
u^{\sf T} = e_1^{\sf T} {\rm adj}(I - A),
\label{eq:u}
\end{equation}
and notice
\begin{equation}
u^{\sf T} = e_1^{\sf T} {\rm adj}(M_\pm),
\nonumber
\end{equation}
because $u$ is independent of the first column of $M_\pm$.
Thus by \eqref{eq:xL} and \eqref{eq:xR},
\begin{equation}
y^\pm_1 = e_1^{\sf T} M_\pm^{-1} c
= \frac{u^{\sf T} c}{\det(M_\pm)}.
\label{eq:xLR1}
\end{equation}
This formula connects $y^-_1$ and $y^+_1$ and we use it below to prove Theorem \ref{th:main}.

\subsection*{Main arguments}

\begin{lemma}
If $P$ has full rank and $u^{\sf T} c = 0$ then $f$ has a fixed point.
\label{le:uTc}
\end{lemma}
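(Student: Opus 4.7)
The plan is to handle two cases depending on whether $\det(M_-)$ vanishes (an analogous argument covers $M_+$).

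First, I would observe that when $P$ has full rank, the vector $u$ is nonzero. Indeed, by cofactor expansion along the first column the entries of $u$ are, up to sign, the $(n-1) \times (n-1)$ minors of $P$, and full rank of $P$ guarantees at least one such minor is nonzero. This fact will be needed in the degenerate case below.

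Next, the easy case: suppose $\det(M_-) \ne 0$. Then by \eqref{eq:xLR1} and the hypothesis $u^{\sf T} c = 0$ we have $y^-_1 = 0 \le 0$, so $y^- = M_-^{-1} c$ is a fixed point of the left piece of $f$ and therefore of $f$. (If instead $\det(M_+) \ne 0$ the same argument gives that $y^+$ is a fixed point.)

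The main work is the degenerate case $\det(M_-) = 0$ (the case $\det(M_+) = 0$ is identical). Since the last $n-1$ columns of $M_-$ are exactly the columns of $P$, full rank of $P$ forces $\operatorname{rank}(M_-) = n-1$, so the first column of $M_-$ lies in the span of the columns of $P$. Writing this dependence gives a null vector of $M_-$ whose first component is nonzero; the null space of $M_-$ is one-dimensional and spanned by such a vector $v$ with $v_1 \ne 0$. Meanwhile, applying the adjugate identity to $M_-$ yields $u^{\sf T} M_- = \det(M_-) e_1^{\sf T} = 0$, so $u$ is a nonzero element of the (one-dimensional) left null space of $M_-$. The hypothesis $u^{\sf T} c = 0$ therefore places $c$ in the column space of $M_-$, so the equation $M_- x = c$ has a solution. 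Because $v_1 \ne 0$, the affine line of solutions $\{x_0 + tv : t \in \mathbb{R}\}$ has an unrestricted first coordinate, so we can choose $t$ so that the first coordinate is $\le 0$. Such a point is a fixed point of the left piece of $f$ and hence of $f$.

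The only subtle step is establishing that the null space of $M_-$ contains a vector with nonzero first component; everything else is a direct application of the formula \eqref{eq:xLR1} and the fundamental theorem of linear algebra (a vector is in the column space iff it is orthogonal to the left null space).
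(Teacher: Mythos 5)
Your proof is correct and follows essentially the same route as the paper: show $u \ne 0$ from the full rank of $P$, dispose of the case $\det(M_-) \ne 0$ via \eqref{eq:xLR1}, and in the degenerate case use the adjugate identity to identify $u$ as spanning the left null space of $M_-$, place $c$ in the column space, and slide along the null direction (which has nonzero first component) to reach $x_1 \le 0$. The only cosmetic differences are that you obtain the one-dimensionality of the null space and the condition $v_1 \ne 0$ directly from $\operatorname{rank}(M_-) \ge \operatorname{rank}(P) = n-1$, whereas the paper argues via the adjugate and by contradiction; both are valid.
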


\begin{proof}
Suppose $P$ has full rank.
Then there exists $i \in \{ 1,\ldots,n \}$ such that removing the $i^{\rm th}$ row from $P$
leaves an invertible $(n-1) \times (n-1)$ matrix,
thus the $i^{\rm th}$ component of $u$ is non-zero, so certainly $u \ne 0$.

Suppose also $u^{\sf T} c = 0$.
Then $\det(M_-) = 0$, for otherwise $y^-$, given by \eqref{eq:xL}, would be a fixed point of $f$ (with $y^-_1 = 0$).
The nullspace of $M_-$ cannot be more than one-dimensional, for otherwise ${\rm adj}(M_-)$ would
be the zero matrix and then $u = 0$.
So the nullspace of $M_-$ is one-dimensional, spanned by some non-zero $v \in \mathbb{R}^n$.
Notice $v_1 \ne 0$, for otherwise the remaining components of $v$ would form a non-zero vector in the nullspace of $P$
(which we know has full rank).
Also $u$ spans the nullspace of $M_-^{\sf T}$,
because $u^{\sf T} M_- = e_1^{\sf T} {\rm adj}(M_-) M_- = \det(M_-) e_1^{\sf T} = 0$.
But $u^{\sf T} c = 0$, thus $c$ belongs to the column space of $M_-$
by the fundamental theorem of linear algebra.
That is, $M_- z = c$ for some $z \in \mathbb{R}^n$,
and so $M_- (z + t v) = c$ for all $t \in \mathbb{R}$.
Since $v_1 \ne 0$ there exists $t \in \mathbb{R}$ such that $z_1 + t v_1 \le 0$,
with which $z + t v$ is a fixed point of $f$.
\end{proof}

\begin{proof}[Proof of Theorem \ref{th:main}]
Suppose $P$ has full rank and $f$ has no fixed points.
To prove Theorem \ref{th:main} (by contrapositive) we show every forward orbit of $f$ diverges.

By Lemma \ref{le:uTc}, $u^{\sf T} c \ne 0$.
In view of the substitution $x \mapsto -x$ we can assume $u^{\sf T} c > 0$.
Then \eqref{eq:xLR1} implies $\det(M_-) \ge 0$, for otherwise $y^-$ is a fixed point of $f$;
also $\det(M_+) \le 0$, for otherwise $y^+$ is a fixed point of $f$.
Then for any $x \in \mathbb{R}^n$ with $x_1 \le 0$,
\begin{equation}
u^{\sf T} f(x) = u^{\sf T} \left( \left( A - b e_1^{\sf T} \right) x + c \right)
= u^{\sf T} (x - M_- x + c).
\nonumber
\end{equation}
But $u^{\sf T} M_- x = e_1^{\sf T} {\rm adj}(M_-) M_- x = \det(M_-) x_1$,
so since $\det(M_-) \ge 0$ and $x_1 \le 0$ we have
\begin{equation}
u^{\sf T} f(x) \ge u^{\sf T} x + u^{\sf T} c.
\label{eq:proof2}
\end{equation}
Similarly for any $x \in \mathbb{R}^n$ with $x_1 \ge 0$,
\begin{equation}
u^{\sf T} f(x) = u^{\sf T} (x - M_+ x + c),
\nonumber
\end{equation}
where $u^{\sf T} M_+ x = \det(M_+) x_1$, so again we have \eqref{eq:proof2}.
Thus for any forward orbit of $f$
the value of $u^{\sf T} x$ increases by at least $u^{\sf T} c > 0$ every iteration,
so the orbit diverges.
\end{proof}

\subsection*{Consequences to bifurcation theory}

As the parameters of a piecewise-smooth map are varied,
a {\em border-collision bifurcation} (BCB) occurs when a fixed point collides with a switching manifold \cite{DiBu08,Si16}.
BCBs correspond to the onset of sticking motion in mechanical systems with stick-slip friction \cite{DiKo03},
changes in the firing patterns of neurons \cite{GrKr14} and in the periods of trade cycles \cite{PuGa06},
and create chaotic dynamics in various electrically-switched control systems \cite{YuBa98}.
The local dynamics associated with many BCBs can be characterised by a piecewise-linear map of the form \eqref{eq:f},
where $u^{\sf T} c$ changes sign as we pass through the bifurcation.

Since at least the 1963 work of Brousin {\em et al.}~\cite{BrNe63},
it has been known there are two generic cases for the behaviour of fixed points near BCBs.
By \eqref{eq:xLR1}, if $\det(M_-) \det(M_+) > 0$ the map has one fixed point on each side of the bifurcation,
while if $\det(M_-) \det(M_+) < 0$ the map has two fixed points on one side of the bifurcation
and no fixed points on the other side of the bifurcation (this case is termed a {\em nonsmooth fold}).
Theorem \ref{th:main} applies to nonsmooth folds and shows, at least for leading-order truncated form,
there are no local invariant sets on one side of the bifurcation.
That the map simply has no period-two solutions on this side of the bifurcation was conjectured by Feigin in the 1970's \cite{Fe78},
but only proven relatively recently \cite{Si14d}.

Theorem \ref{th:main} was inspired by Proposition 19 of Carmona {\em et al.}~\cite{CaFr02}
which is a similar result for piecewise-linear differential equations.
It remains to generalise the result to other classes of nonsmooth dynamical systems,
particularly Filippov systems and hybrid systems that also exhibit nonsmooth folds \cite{DiNo08}.
This would show that nonsmooth folds where one fixed point is attracting bring about a catastrophic change to the attractor,
i.e.~such bifurcations can be viewed as tipping points \cite{BuGr22}.

\subsection*{Acknowledgements}

This work was supported by Marsden Fund contract MAU2209 managed by Royal Society Te Ap\={a}rangi.

\end{document}